\newif\ifMAKEPICS
\DeclareExpandableDocumentCommand{\convertlen}{ O{cm} m }
{
	\dim_to_decimal_in_unit:nn { #2 } { 1 #1 } cm
}
\newcommand{\ignore}[1]{}
\newtheorem{theorem}{Theorem}[section]
\newtheorem{remark}[theorem]{Remark}
\begin{document}

\title{Hierarchical {DWR} {E}rror {E}stimates for the {N}avier {S}tokes 
{E}quation:  $h$ and $p$ {E}nrichment}
	\author[1]{B. Endtmayer}
		\author[1]{U. Langer}
	\author [2]{J. P. Thiele}
	\author[2]{T. Wick}
	\affil[1]{Johann Radon Institute for Computational and Applied Mathematics, Austrian Academy of Sciences, Altenbergerstr. 69, A-4040 Linz, Austria}
	\affil[2]{Institut f\"ur Angewandte Mathematik, Leibniz Universit\"at Hannover, Welfengarten 1, 30167 Hannover, Germany}
	
	\date{}
	
	\maketitle

\abstract{
In this work, we further develop multigoal-oriented a posteriori error estimation for 
the nonlinear, stationary, incompressible Navier-Stokes equations. It is an extension 
of our previous work 
[B. Endtmayer, U. Langer, T. Wick:  Two-side a posteriori error estimates for the {DWR} method, {\it SISC}, 2019, accepted].
We now focus on $h$ mesh refinement and
$p$ enrichment for the error estimator. 
These advancements are demonstrated with 
the help of a numerical example%
.
}

\section{Introduction}
\label{sec: Introduction}
Multigoal-oriented error estimation offers the opportunity to control several 
quantities of interest simultaneously. In recent years, we have developed 
a version \cite{EnLaWi18,2019Two} 
which relies on
the dual-weighted residual method \cite{BeRa01},
and 
also balances the discretization error with the nonlinear iteration error \cite{RanVi2013}.
The localization is based on the weak formulation proposed in \cite{RiWi15_dwr}.
Our method 
uses on 
hierarchical finite element spaces. 
Here, we investigate $h$ refinement along with $p$ refinement 
to generate enriched spaces. 
These ideas are applied to the stationary 
incompressible Navier-Stokes equations. 
It is well-known that the 
spaces for the velocities and the pressure 
must be balanced 
in order to satisfy an inf-sup condition \cite{GiRa1986}. 
These requirements must be reflected 
in the design of the adjoint problems in 
dual-weighted residual error estimation 
and our proposed $p$ refinement. To demonstrate the performance of the 
error estimator, we adopt the 2D-1 fluid flow 
benchmark \cite{TurSchabenchmark1996}.

\section{The Model Problem and Discretization}
\label{sec:Model Problem and Discretization}
\subsection{The Model Problem} \label{SubSection: Model Problem}
We consider the  stationary Navier Stokes 2D-1 benchmark  problem \cite{TurSchabenchmark1996} 
as our model problem. This configuration  was also considered in \cite{2019Two}.
The domain $\Omega \subset \mathbb{R}^2$ is given by 
$(0,2.2) \times (0,H) \setminus B$, 
and $B$ is
the ball with the center $(0.2,0.2)$ and the radius $0.05$ as given in \cite{TurSchabenchmark1996} and visualized in Figure~\ref{Figure: Omega}.
Find $\vec{u} = (u,p)$ such that
\begin{eqnarray*}
- \text{div}(\nu(\nabla u + \nabla u^T))  + (u\cdot \nabla ) u - \nabla p =&  0,  &\qquad   \text{ in }\Omega, \\ 
 -\text{div}(u) =& 0,& \qquad    \text{ in }\Omega, \\
 u=& u_{\text{in}}& \qquad \text{ on }	\Gamma_{\text{in}},  \\
 u=&0 & \qquad \text{ on }\Gamma_{\text{no-slip}},\\
 \nu(\nabla u+\nabla u ^T)\vec{n}+p\vec{n}=&0&  \qquad \text{ on }\Gamma_{\text{out}},
\end{eqnarray*}
where $\Gamma_{\text{in}} :=\{x=0\} \cap \partial \Omega$, 
				$\Gamma_{\text{no-slip}} := \overline{\partial \Omega \setminus (\Gamma_{\text{in}} \cup \Gamma_{\text{out}})}$
			and $\Gamma_{\text{out}} :=(\{x=2.2\} \cap \partial \Omega)\setminus \partial (\{x=2.2\} \cap \partial \Omega).$
Furthermore, the viscosity $v = 10^{-3}$  and $u_{\text{in}}(x,y)=  (0.3 w(y),0)$ with $w(y)=4y(H-y)/H^2$ and $H=0.41$. 
The corresponding weak form 
reads as follows:
Find $\vec{u} = (u,p) \in V_{BC} := [H^1(\Omega)]_{BC}^2 \times L^2(\Omega)$ 
such that 
\begin{equation} \label{Equation: Model Problem}
A(\vec{u})(\vec{v}) = 0 \quad\forall \vec{v} = (v_u,v_p) \in V_{0} := [H^1_{0}(\Omega)]^2 \times L^2(\Omega)
\end{equation}
with
\begin{eqnarray*}
A(\vec{u})(\vec{v}):=&(\nu(\nabla u + \nabla u^T), \nabla v_u)_{[L^2(\Omega)]^{2 \times 2}}+ ((u\cdot \nabla ) u,v_u)_{[L^2(\Omega)]^{2}} \nonumber
\\ &+ (p,\text{div}(v_u))_{L^2(\Omega)}-(\text{div}(u), v_p)_{L^2(\Omega)},\nonumber
\end{eqnarray*}
where $[H^1(\Omega)]_{BC}^2:= \{ u \in [H^1(\Omega)]^2: u_{|\Gamma_{\text{in}}}  = u_{\text{in}} \wedge u_{|\Gamma_{\text{no-slip}}} =0  \}$ and \\$[H^1_{0}(\Omega)]^2:= \{ v \in [H^1(\Omega)]^2: v_{|\Gamma_{\text{in}}}  = 0 \wedge v_{|\Gamma_{\text{no-slip}}}  =0\}$.
\begin{figure}[H]
\begin{minipage}[t]{0.47\linewidth}
\definecolor{ffffff}{rgb}{1,1,1}
\definecolor{ffqqqq}{rgb}{1,0,0}
\definecolor{ttqqqq}{rgb}{0.2,0,0}
\definecolor{qqzzcc}{rgb}{0,0.6,0.8}
\definecolor{zzttqq}{rgb}{0.6,0.2,0}
\begin{tikzpicture}[line cap=round,line join=round,>=triangle 45,x=2.3cm,y=2.3cm]
\clip(-0.2,-0.0) rectangle (2.4,0.41);
\fill[line width=2pt,color=zzttqq,fill=zzttqq,fill opacity=0.10000000149011612] (0,0) -- (0,0.41303969424395254) -- (2.208741109888243,0.41437055494412134) -- (2.2008824918026844,0) -- cycle;
\draw [line width=0.5pt,color=black,fill=ffffff,fill opacity=1] (0.2,0.2) circle (0.05);
\draw [line width=1pt,color=qqzzcc] (0,0)-- (0,0.41303969424395254);
\draw [line width=1pt,color=ttqqqq] (0,0.41303969424395254)-- (2.208741109888243,0.41437055494412134);
\draw [line width=1pt,color=ffqqqq] (2.208741109888243,0.41437055494412134)-- (2.2008824918026844,0);
\draw [line width=1pt,color=ttqqqq] (2.2008824918026844,0)-- (0,0);
\begin{scriptsize}
\draw[color=qqzzcc] (-0.1,0.205) node {$\Gamma_{\text{in}}$};
\draw[color=ffqqqq] (2.08,0.205) node {$\Gamma_{\text{out}}$};
\draw[color=ttqqqq] (1.1157337774147214,0.1096713051094704) node {$\Gamma_{\text{no-slip}}$};
\end{scriptsize}
\end{tikzpicture}
\end{minipage} \hfill
\begin{minipage}[t]{0.47\linewidth}
\includegraphics[scale=0.234]{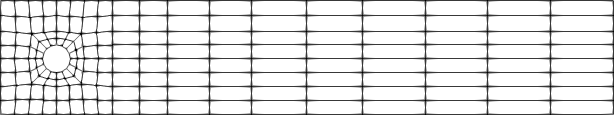}
\end{minipage} 
\caption{ The computational domain $\Omega$  (left) and the initial mesh (right). \label{Figure: Omega}}
\end{figure}

\subsection{Discretization}
Let $\mathcal{T}_h$ be a decomposition of $\Omega \subset \mathbb {R}^2$ into quadrilateral elements. Furthermore, we assume that $\mathcal{T}_{\frac{h}{2}}$ is the uniform refinement of $\mathcal{T}_h$.
We discretize our problem using $[Q^2_c]^2$ , i.e. piecewise bi-quadratic elements for the velocity $u$, and $Q^1_c$, i.e. piecewise bi-linear elements for the pressure $p$. 
The resulting space using the mesh $\mathcal{T}_h$ will be denoted by $V_h$. For a more detailed explanation of the discretization, we refer to \cite{2019Two}.
The resulting space using the mesh $\mathcal{T}_{\frac{h}{2}}$ will be denoted by $V_{\frac{h}{2}}$. We say $V_{\frac{h}{2}}$ is the (hierarchical) $h$-refined finite element space of $V_h$. 
Furthermore we consider using $[Q^4_c]^2$ , i.e. piecewise bi-quartic elements for the velocity $u$, and $Q^2_c$, i.e. piecewise bi-quadratic elements for the pressure $p$. The resulting finite element space using the mesh $\mathcal{T}_h$ will be denoted by $V_h^{(2)}$. Here we have the property that $V_{h} \subset V_h^{(2)}$. We say $ V_h^{(2)}$ is the (hierarchical) $p$-refined finite element space of $V_h$.
The corresponding discretized problems read as: Find $\vec{u}_h \in V_h \cap V_{BC}$, $\vec{u}_{\frac{h}{2}}\in V_{\frac{h}{2}}\cap V_{BC}$ and $\vec{u}_h^{(2)}\in V_h^{(2)}\cap V_{BC}$ such that 
		\begin{eqnarray*}
			A(\vec{u}_h)(\vec{v}_h)&=&0  \qquad \forall \vec{v}_h \in V_h \cap V_0, \\
				A(\vec{u}_{\frac{h}{2}})(\vec{v}_{\frac{h}{2}})&=&0  \qquad \forall \vec{v}_{\frac{h}{2}} \in V_{\frac{h}{2}} \cap V_0, \\
				A(\vec{u}_h^{(2)})(\vec{v}_h^{(2)})&=&0  \qquad \forall \vec{v}_h^{(2)} \in V_h^{(2)} \cap V_0.
		\end{eqnarray*}

\begin{remark}
	We would like to mention that the domain $\Omega$ is not of polygonal shape.
	Therefore, a decomposition into quadrilateral elements is not possible. 
	However, we approximate the  ball  $B$ by a polygonal domain, which is adapted after every refinement process  by describing it as a spherical manifold in \verb|deal.II| \cite{dealII90} 
	using the 
	command \verb|Triangulation::set_manifold| .
	\end{remark}

\section{Dual Weighted Residual Method and Error Representation}
We are  primarily interested in one or more particular quantities  of interest. 
	We employ the dual weighted residual (DWR) method \cite{BeRa01} 
	for estimating the error in these quantities.
	To connect  the  quantity of interest $J$ with the model problem, we consider the adjoint problem.
	\subsection{The Adjoint Problem}
	The adjoint problem 
	reads as follows:
	Find $\vec{z} \in V_0$ such that 
	\begin{eqnarray}
	\label{Eqn:adjointproblem}
	A'(\vec{u})(\vec{v},\vec{z})=J'(\vec{u})(\vec{v})
	\quad \forall \vec{v} \in V_0,
	\end{eqnarray} 
	where $A'$ and $J'$ denote the Frechet derivative of $A$ and $J$, respectively, and $\vec{u}$ is the solution of the model problem  (\ref{Equation: Model Problem}).
	\begin{theorem}\label{Theorem: Error Representation}
		Let us assume that $J \in \mathcal{C}^3(V_{BC},\mathbb{R})$. 
		If $\vec{u}$ solves the model problem (\ref{Equation: Model Problem})
		and $\vec{z}$ solves the adjoint problem  
		(\ref{Eqn:adjointproblem}),
		then,  for arbitrary fixed  $\vec{\tilde{u}} \in V_{BC}$ and $ \vec{\tilde{z}} \in V_0$, the following error representation formula holds:
		
		\begin{eqnarray*} \label{Error Representation}
		J(\vec{u})-J(\vec{\tilde{u}})&= \frac{1}{2}\rho(\vec{\tilde{u}})(\vec{z}-\vec{\tilde{z}})+\frac{1}{2}\rho^*(\vec{\tilde{u}},\vec{\tilde{z}})(\vec{u}-\vec{\tilde{u}}) 
		+ \rho (\tilde{\vec{u}})(\vec{\tilde{z}}) + \mathcal{R}^{(3)},\nonumber
		\end{eqnarray*}
	 where
		$\rho(\vec{\tilde{u}})(\cdot) := -A(\vec{\tilde{u}})(\cdot)$,
		$\rho^*(\vec{\tilde{u}},\vec{\tilde{z}})(\cdot) := J'(\vec{\tilde{u}})(\cdot)-A'(\vec{\tilde{u}})(\cdot,\vec{\tilde{z}})$,
		and 
				{\footnotesize
			\begin{equation}
			\label{Error Estimator: Remainderterm}
			\mathcal{R}^{(3)}:=\frac{1}{2}\int_{0}^{1}[J'''(\vec{\tilde{u}}+s\vec{e})(\vec{e},\vec{e},\vec{e})
			-A'''(\vec{\tilde{u}}+s\vec{e})(\vec{e},\vec{e},\vec{e},\vec{\tilde{z}}+s\vec{e^*})
			-3A''(\vec{\tilde{u}}+s\vec{e})(\vec{e},\vec{e},\vec{e^*})]s(s-1)\,ds, 
			\end{equation}
		}
		with $\vec{e}=\vec{u}-\vec{\tilde{u}}$ and $\vec{e^*} =\vec{z}-\vec{\tilde{z}}$.	
	\end{theorem}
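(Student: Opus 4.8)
The plan is to follow the classical Becker--Rannacher Lagrangian argument combined with an exact trapezoidal rule. First I would introduce the Lagrangian functional
\begin{equation*}
\mathcal{L}(\vec{u},\vec{z}) := J(\vec{u}) - A(\vec{u})(\vec{z}) = J(\vec{u}) + \rho(\vec{u})(\vec{z}),
\end{equation*}
defined on $V_{BC} \times V_0$. Differentiating $\mathcal{L}$ partially in $\vec{z}$ reproduces the model problem (\ref{Equation: Model Problem}), while the partial derivative in $\vec{u}$ reproduces the adjoint problem (\ref{Eqn:adjointproblem}); hence the pair $(\vec{u},\vec{z})$ formed by the exact primal and adjoint solutions is a stationary point of $\mathcal{L}$. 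Since $A(\vec{u})(\vec{z})=0$ by (\ref{Equation: Model Problem}), one has $\mathcal{L}(\vec{u},\vec{z}) = J(\vec{u})$, whereas at the approximation $\mathcal{L}(\vec{\tilde{u}},\vec{\tilde{z}}) = J(\vec{\tilde{u}}) + \rho(\vec{\tilde{u}})(\vec{\tilde{z}})$. Subtracting yields the bookkeeping identity
\begin{equation*}
J(\vec{u}) - J(\vec{\tilde{u}}) = \mathcal{L}(\vec{u},\vec{z}) - \mathcal{L}(\vec{\tilde{u}},\vec{\tilde{z}}) + \rho(\vec{\tilde{u}})(\vec{\tilde{z}}),
\end{equation*}
which already isolates the computable term $\rho(\vec{\tilde{u}})(\vec{\tilde{z}})$ appearing in the statement.

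Next I would evaluate the difference $\mathcal{L}(\vec{u},\vec{z}) - \mathcal{L}(\vec{\tilde{u}},\vec{\tilde{z}})$ along the segment joining $(\vec{\tilde{u}},\vec{\tilde{z}})$ to $(\vec{u},\vec{z})$. Writing $\vec{e} = \vec{u}-\vec{\tilde{u}}$, $\vec{e^*}=\vec{z}-\vec{\tilde{z}}$ and $\phi(s) := \mathcal{L}(\vec{\tilde{u}}+s\vec{e},\, \vec{\tilde{z}}+s\vec{e^*})$, the fundamental theorem of calculus gives $\mathcal{L}(\vec{u},\vec{z})-\mathcal{L}(\vec{\tilde{u}},\vec{\tilde{z}}) = \int_0^1 \phi'(s)\,ds$. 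The hypothesis $J\in\mathcal{C}^3(V_{BC},\mathbb{R})$, together with the fact that $A$ is a polynomial (at most quadratic) semilinear form, guarantees $\phi\in\mathcal{C}^3([0,1])$, so I may apply the trapezoidal rule with exact integral remainder,
\begin{equation*}
\int_0^1 \phi'(s)\,ds = \tfrac12\big(\phi'(0)+\phi'(1)\big) + \tfrac12\int_0^1 \phi'''(s)\,s(s-1)\,ds,
\end{equation*}
an elementary identity proved by integrating $\int_0^1 \phi'''(s)\,s(s-1)\,ds$ by parts twice and using $s(s-1)|_{s=0}=s(s-1)|_{s=1}=0$.

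It then remains to identify the three ingredients. Because $\vec{u},\vec{\tilde{u}}\in V_{BC}$ and $\vec{z},\vec{\tilde{z}}\in V_0$, the increments satisfy $\vec{e},\vec{e^*}\in V_0$, so they are admissible test functions; the stationarity of $\mathcal{L}$ at $(\vec{u},\vec{z})$ then gives $\phi'(1)=\mathcal{L}'(\vec{u},\vec{z})(\vec{e},\vec{e^*})=0$. A direct computation of the partial derivatives of $\mathcal{L}$ at $(\vec{\tilde{u}},\vec{\tilde{z}})$ produces $\phi'(0) = \rho^*(\vec{\tilde{u}},\vec{\tilde{z}})(\vec{e}) + \rho(\vec{\tilde{u}})(\vec{e^*})$, whence the two primal and adjoint weighting terms. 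Finally, differentiating $\phi$ three times and exploiting the linearity of $A(\cdot)(\cdot)$ in its test argument (so that derivatives of order $\ge 2$ in the $\vec{z}$-slot vanish) yields
\begin{equation*}
\phi'''(s) = J'''(\vec{\tilde{u}}+s\vec{e})(\vec{e},\vec{e},\vec{e}) - A'''(\vec{\tilde{u}}+s\vec{e})(\vec{e},\vec{e},\vec{e},\vec{\tilde{z}}+s\vec{e^*}) - 3A''(\vec{\tilde{u}}+s\vec{e})(\vec{e},\vec{e},\vec{e^*}),
\end{equation*}
which is exactly the integrand of $\mathcal{R}^{(3)}$. Assembling these pieces gives the claimed representation. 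I expect the only delicate point to be this last step: carefully bookkeeping the mixed $\vec{u}$--$\vec{z}$ derivatives of $A(\vec{u})(\vec{z})$, and in particular the combinatorial factor $3=\binom{3}{1}$ multiplying $A''$, which counts which of the three successive differentiations acts on the $\vec{z}$-argument (yielding $\vec{e^*}$) while the other two act on the $\vec{u}$-argument.
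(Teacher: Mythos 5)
Your argument is correct and is essentially the standard Lagrangian-plus-exact-trapezoidal-rule proof that the paper itself defers to via the citations to Endtmayer--Langer--Wick and Rannacher--Vihharev: the identity $J(\vec{u})-J(\vec{\tilde{u}})=\mathcal{L}(\vec{u},\vec{z})-\mathcal{L}(\vec{\tilde{u}},\vec{\tilde{z}})+\rho(\vec{\tilde{u}})(\vec{\tilde{z}})$, the vanishing of $\phi'(1)$ by stationarity, the evaluation of $\phi'(0)$ as $\rho^*(\vec{\tilde{u}},\vec{\tilde{z}})(\vec{e})+\rho(\vec{\tilde{u}})(\vec{e^*})$, and the computation of $\phi'''$ with the factor $3$ from the mixed derivatives all check out. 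No gaps.
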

	\begin{proof}
		We refer the reader to \cite{EnLaWi18} and \cite{RanVi2013}.
	\end{proof}
		\begin{remark}
			In practice, the arbitrary elements  $\vec{\tilde{u}} \in V_{BC}$ and $ \vec{\tilde{z}} \in V_0$ will be replaced by
			approximations $\vec{u}_h$ and $\vec{z}_h$ to 
			the corresponding finite element solutions.
		\end{remark}
	\begin{remark}
		The error representation formula in Theorem~\ref{Theorem: Error Representation} is exact but not computable,
		because $\vec{u}$ and $\vec{z}$ are not known.
	\end{remark}

\subsection{Error Estimation and Adaptive Algorithm} \label{SubSection: Error Estimation}
	The different error estimator parts are discussed in  \cite{2019Two}. In particular, it turns out that $\eta_h:= \frac{1}{2}\rho(\vec{\tilde{u}})(\vec{z}-\vec{\tilde{z}})+\frac{1}{2}\rho^*(\vec{\tilde{u}},\vec{\tilde{z}})(\vec{u}-\vec{\tilde{u}}) $ is related to the discretization error \cite{RanVi2013,EnLaWi18,2019Two}. 
	The idea is to replace the quantities $\vec{u}-\vec{\tilde{u}}$  and  $\vec{z}-\vec{\tilde{z}}$ by some computable quantities.  
	This can be done via higher order interpolation \cite{BeRa01,RanVi2013} or hierarchically (via an additional solve on an enriched space) \cite{BeRa01,EnLaWi18,KoeBruBau2019a}. 
	If $\vec{u^+_h}$, $\vec{z^+_h}$ are the solution, then we approximate $\vec{u}-\vec{\tilde{u}}$ and $\vec{z}-\vec{\tilde{z}}$ by $\vec{u^+_h}-\vec{\tilde{u}}$ and $\vec{z^+_h}-\vec{\tilde{z}}$, respectively. 
	The new computable error estimator then reads as $$\eta_h^+:= \frac{1}{2}\rho(\vec{\tilde{u}})(\vec{z^+_h}-\vec{\tilde{z}})+\frac{1}{2}\rho^*(\vec{\tilde{u}},\vec{\tilde{z}})(\vec{u^+_h}-\vec{\tilde{u}}). $$
	Under some saturation assumption, it was shown in \cite{2019Two} that
	the resulting error estimator is efficient and reliable. 
	We consider the two different error estimators
	\begin{eqnarray*}
	\eta_h^{(2)}&:= \frac{1}{2}\rho(\vec{\tilde{u}})(\vec{z}_h^{(2)}-\vec{\tilde{z}})+&\frac{1}{2}\rho^*(\vec{\tilde{u}},\vec{\tilde{z}})(\vec{u}_h^{(2)}-\vec{\tilde{u}}), \\
	\eta_{\frac{h}{2}}&:= \frac{1}{2}\rho(\vec{\tilde{u}})(\vec{z}_{\frac{h}{2}}-\vec{\tilde{z}})+&\frac{1}{2}\rho^*(\vec{\tilde{u}},\vec{\tilde{z}})(\vec{u}_{\frac{h}{2}}-\vec{\tilde{u}}).
	\end{eqnarray*}
	We call $\eta_h^{(2)}$ and $\eta_{\frac{h}{2}}$ the $p$-enriched and $h$-enriched error estimators, respectively.
	The error estimators are localized using the partition of unity technique proposed in \cite{RiWi15_dwr}.
	The marking strategy and algorithms are the same as in \cite{2019Two}.
	\begin{remark}
		The efficiency and reliability are not guaranteed  under the corresponding saturation assumption in \cite{2019Two} for $\eta_{\frac{h}{2}}$, since the boundary is  adapted in every refinement step.
	\end{remark}
	\begin{remark}
			We use the algorithm presented in \cite{2019Two}. The algorithm using $p$ enrichment coincides with Algorithm 3 in \cite{2019Two}. In the algorithm, where we use $h$ enrichment, we replace $V_h^{(2)}$ by $V_{\frac{h}{2}} $. 
		\end{remark}

\section{Numerical Experiment}
%
We compare the two error estimators introduced in Section~\ref{SubSection: Error Estimation}. In the $p$ enriched case,  we use uniform $p$ refinement for the hierarchical approximation. The results for $p$ enrichment have already been computed in \cite{2019Two}.  In the $h$ enriched case, we use uniform $h$ refinement.  The configuration of the problem is given in Section~\ref{SubSection: Model Problem}.

\subsection{Quantities of Interest}
We use 
the quantities of interest defined in \cite{TurSchabenchmark1996,2019Two}:
			\begin{eqnarray*}
			\Delta p(\vec{u}):= &p(X_1)-p(X_2),\\
			c_{\text{drag}}(\vec{u}):= &C\int_{\partial B} \left[\nu (\nabla u + \nabla u^T)\vec{n} - p \vec{n }\right]\cdot \vec{e}_1\,\text{ d}s_{(x,y)},\\	
			c_{\text{lift}}(\vec{u}):= &C\int_{\partial B} \left[\nu (\nabla u + \nabla u^T)\vec{n} - p \vec{n }\right]\cdot \vec{e}_2\,\text{ d}s_{(x,y)},\\	
			\end{eqnarray*}
			where $C = 500$, $X_1= (0.15,0.2)$, $X_2= (0.25,0.2)$, $\vec{e}_1:=(1,0)$, $\vec{e}_2:=(0,1)$, and $\vec{n}$ denotes the outer normal vector.
			To do adaptivity for all of them at  once we combine them to one functional $$J_\mathfrak{E}(\vec{v}_h):=\frac{|\Delta p(\vec{u}^+_h-\vec{v}_h)|}{|\Delta p(\vec{u}_h)|}+\frac{|c_{\text{drag}}(\vec{u}^+_h-\vec{v}_h)|}{|c_{\text{drag}}(\vec{u}_h)|}+\frac{|c_{\text{lift}}(\vec{u}^+_h-\vec{v}_h)|}{|c_{\text{lift}}(\vec{u}_h)|}.$$
By $J_\mathfrak{E}^p$ or $J_\mathfrak{E}^h$, we denote the functionals where we replace $\vec{u}^+_h$ with  $\vec{u}_h^{(2)}$ or $\vec{u}_\frac{h}{2}$,
respectively.
			More information on how to treat multiple functionals at once can be found in \cite{HaHou03,Ha08, BruZhuZwie16,PARDO20101953,KerPruChaLaf2017,EnWi17,EnLaWi18,2019Two}. 
			The implementation is done in the finite element library \verb|deal.II| \cite{dealII90}, and follows the code in \cite{2019Two}. In this section, we compare two different sequences of meshes. 
			The sequences are generated by the error estimators $\eta_h^{(2)}$ and $\eta_{\frac{h}{2}}$.
			First of all, let us define the effectivity indices by 
			$$I_{eff}^p := \frac{\eta_h^{(2)}}{|J_\mathfrak{E}^p(u)-J_\mathfrak{E}^p(u_h)|} \qquad \text{ and } \qquad I_{eff}^h := \frac{\eta_{\frac{h}{2}}}{|J_\mathfrak{E}^h(u)-J_\mathfrak{E}^h(u_h)|}.$$
			The $p$ enriched discrete remainder part of the error estimator $\eta^{(2)}_\mathcal{R}$ is defined as the quantity \eqref{Error Estimator: Remainderterm}, where we replace $\vec{\tilde{u}},\vec{\tilde{z}},\vec{u},\vec{z}$ by $\vec{u}_h,\vec{z}_h,\vec{u}_h^{(2)},\vec{z}_h^{(2)}$, respectively. The $h$ enriched discrete remainder part of the error estimator $\eta_{\mathcal{R},\frac{h}{2}}$ is defined as the quantity \eqref{Error Estimator: Remainderterm}, where we replace $\vec{\tilde{u}},\vec{\tilde{z}},\vec{u},\vec{z}$ by $\vec{u}_h,\vec{z}_h,\vec{u}_\frac{h}{2},\vec{z}_\frac{h}{2}$, respectively.
			Finally, we define the gaps between the theoretical findings in \cite{2019Two} by
			$$\eta_{\mathbb{E}}^{(2)} := \left||J(\vec{u}_h^{(2)})-J(\vec{u}_h)| - |\eta_h^{(2)}+ \rho (\vec{u}_h,\vec{z}_h) +\eta_{\mathcal{R}}^{(2)} |\right|,$$
			and
			$$\eta_{\mathbb{E},\frac{h}{2}} :=\left| |J(\vec{u}_\frac{h}{2})-J(\vec{u}_h)| - |\eta_\frac{h}{2}+ \rho (\vec{u}_h,\vec{z}_h) +\eta_{\mathcal{R},\frac{h}{2}} |\right|.$$

\subsection{Discussion of the Results}

In Figure \ref{Figure: IeffpIeffh}, the effectivity indices for the two different types of error estimators are shown on their respective grids. 
We see that $h$-enrichment delivers effectivity indices which are very close to one, whereas for $p$-enrichment we have effectivity indices in the range of  $0.2-8.1$. This was also observed in \cite{2019Two}.
In the case of $p$-enrichment, the saturation assumption is violated multiple times, as we observe in Figure~\ref{Figure: Saturation}. 
The saturation assumtion is violated if the error
$|J_\mathfrak{E}^p(\vec{u}_h^{(2)})-J_\mathfrak{E}^p(\vec{u})|$
in the enriched solution 
is larger than $|J_\mathfrak{E}^p(\vec{u}_h)-J_\mathfrak{E}^p(\vec{u})|$.
In the case of $h$-enrichment, this  always happens.
If we compare the errors of the single functionals, which are monitored in Figure \ref{Figure: lift}, Figure \ref{Figure: drag} and Figure \ref{Figure: p}, we conclude that the meshes generated by the $p$-enriched error estimator lead to 
smaller
errors in the single functionals. 
If all the conditions in \cite{2019Two} are fulfilled, then $\eta_{\mathbb{E}}^{(2)}$ and $\eta_{\mathbb{E},\frac{h}{2}} $ are zero. 
However, in the computation of the error estimators, our overall round-off error is in the order of $\varepsilon(\text{double}) \times \text{DOFs}$, where $\varepsilon(\text{double})= 2^{-52}$ is the machine precision for double 
floating point numbers%
\footnote{\url{https://en.wikipedia.org/wiki/Machine_epsilon}}.  In the case of $p$ enrichment, we observe in Figure~\ref{Figure: Theoretical Error} that $\eta_{\mathbb{E}}^{(2)}$ indeed is in the order or even better than the round off errors when summing up the different error contributions. 
In this case, all requirements
 are fulfilled. For $h$ enrichment, we do not have the inclusion $V_h \subset V_\frac{h}{2}$ due to the geometrical approximation. Therefore, these conditions are violated. The effects  are monitored in Figure~\ref{Figure: Theoretical Error} as well. The quantity $\eta_{\mathbb{E},\frac{h}{2}} $ does not only contain numerical round off errors, but also errors coming from the geometrical approximation. However, this is a non-local quantity, and 
 the localization is not straightforward.
{\tiny
\begin{figure}[H]
	\begin{minipage}[t]{0.45\linewidth}
		\centering
			\ifMAKEPICS
			\begin{gnuplot}[terminal=epslatex]
				set output "Figures/Example1tex.tex"
				set datafile separator "|"
				set logscale x
				set grid ytics lc rgb "#bbbbbb" lw 1 lt 0
				set grid xtics lc rgb "#bbbbbb" lw 1 lt 0
				set xlabel '\text{DOFs (degrees of freedom)}'
				set format '
				plot '< sqlite3 Data/p_enrichment/NS2D_1_adaptive_Lift_Drag_PDiff_Radon1_2.db "SELECT DISTINCT DOFS_primal, Ieff from data"' u 1:2 w lp lw 2 title '$I_{eff}^p$',\
				'< sqlite3 Data/h_enrichment/data.db "SELECT DISTINCT DOFS_primal, Ieff from data"' u 1:2 w lp lw 2 title '$I_eff^h$',\
				1lw 4 dashtype 2 title '$1$'\
				#'< sqlite3 Data/p_enrichment/NS2D_1_adaptive_Lift_Drag_PDiff_Radon1_2.db "SELECT DISTINCT DOFS_primal , Estimated_Error_primal from data"' u 1:2 w lp lw 2 title 'Estimated Error(primal)',\
				#'< sqlite3 Data/p_enrichment/NS2D_1_adaptive_Lift_Drag_PDiff_Radon1_2.db "SELECT DISTINCT DOFS_primal , Estimated_Error_adjoint from data"' u 1:2 w lp lw 2 title 'Estimated(adjoint)',\
			\end{gnuplot}
			\fi
			\scalebox{0.47}{\tiny\input{Figures/Example1tex.tex}}
			\captionof{figure}{The two effectivity indices on the corresponding meshes. \label{Figure: IeffpIeffh}}
				\end{minipage} \hfill
								\begin{minipage}[t]{0.45\linewidth}
										\centering
									\ifMAKEPICS
									\begin{gnuplot}[terminal=epslatex]
										set output "Figures/Example3tex.tex"
										set datafile separator "|"
										set logscale
										set grid ytics lc rgb "#bbbbbb" lw 1 lt 0
										set grid xtics lc rgb "#bbbbbb" lw 1 lt 0
										set xlabel '\text{DOFs}'
										set ylabel '\text{Error}'
										set format '
										plot '< sqlite3 Data/p_enrichment/NS2D_1_adaptive_Lift_Drag_PDiff_Radon1_2.db "SELECT DISTINCT DOFS_primal, Exact_Error from data"' u 1:2 w lp lw 2 title '$|J_\mathfrak{E}^p(\vec{u}_h)-J_\mathfrak{E}^p(\vec{u})|$',\
										'< sqlite3 Data/h_enrichment/data.db "SELECT DISTINCT DOFS_primal, Exact_Error from data"' u 1:2 w lp lw 2 title '$|J_\mathfrak{E}^h(\vec{u}_h)-J_\mathfrak{E}^h(\vec{u})|$',\
										'< sqlite3 Data/p_enrichment/NS2D_1_adaptive_Lift_Drag_PDiff_Radon1_2.db "SELECT DISTINCT DOFS_primal, abs(Ju-Juh2) from data"' u 1:2 w lp lw 2 title '$|J_\mathfrak{E}^p(\vec{u}_h^{(2)})-J_\mathfrak{E}^p(\vec{u})|$',\
										'< sqlite3 Data/h_enrichment/data.db "SELECT DISTINCT DOFS_primal, abs(Ju-Juh2) from data"' u 1:2 w lp lw 2 title '$|J_\mathfrak{E}^h(\vec{u}_{\frac{h}{2}})-J_\mathfrak{E}^h(\vec{u})|$',\
										#'< sqlite3 Data/h_enrichment/data.db "SELECT DISTINCT DOFS_primal , Estimated_Error_primal from data"' u 1:2 w lp lw 2 title 'Estimated Error(primal)',\
										#'< sqlite3 Data/h_enrichment/data.db "SELECT DISTINCT DOFS_primal , Estimated_Error_adjoint from data"' u 1:2 w lp lw 2 title 'Estimated(adjoint)',\
									\end{gnuplot}
									\fi
									\scalebox{0.47}{\tiny\input{Figures/Example3tex.tex}}
									\captionof{figure}{Errors in $J_\mathfrak{E}^p$ and $J_\mathfrak{E}^h$ in the solution and enriched solution. \label{Figure: Saturation}}
								\end{minipage}
		\begin{minipage}[t]{0.45\linewidth}
				\centering
					\ifMAKEPICS
					\begin{gnuplot}[terminal=epslatex]
						set output "Figures/Example4tex.tex"
						set datafile separator "|"
						set logscale
						set grid ytics lc rgb "#bbbbbb" lw 1 lt 0
						set grid xtics lc rgb "#bbbbbb" lw 1 lt 0
						set xlabel '\text{DOFs}'
						set format '
						plot '< sqlite3 Data/p_enrichment/NS2D_1_adaptive_Lift_Drag_PDiff_Radon1_2.db "SELECT DISTINCT DOFS_primal, relativeError2 from data"' u 1:2 w lp lw 2 title '$c_{\text{\text{lift}}}^p$',\
						'< sqlite3 Data/h_enrichment/data.db "SELECT DISTINCT DOFS_primal, relativeError1 from data"' u 1:2 w lp lw 2 title '$c_{\text{\text{lift}}}^h$',\
							'< sqlite3 Data/Uniform/uniform_data_p.db "SELECT DISTINCT DOFS_primal, relativeError2 from data_global"' u 1:2 w lp lw 2 title '$c_{\text{\text{lift}}}$',\
						#'< sqlite3 Data/h_enrichment/data.db "SELECT DISTINCT DOFS_primal , Estimated_Error_primal from data"' u 1:2 w lp lw 2 title 'Estimated Error(primal)',\
						#'< sqlite3 Data/h_enrichment/data.db "SELECT DISTINCT DOFS_primal , Estimated_Error_adjoint from data"' u 1:2 w lp lw 2 title 'Estimated(adjoint)',\
					\end{gnuplot}
					\fi
					\scalebox{0.47}{\tiny\input{Figures/Example4tex.tex}}
					\captionof{figure}{The errors in  $c_{\text{lift}}$ for  refinement with $p$-enriched error estimation ($c_{\text{\text{lift}}}^p$), refinement with $h$-enriched error estimation ($c_{\text{\text{lift}}}^h $), and uniform refinement ($c_{\text{\text{lift}}}$).\label{Figure: lift}}
											\end{minipage} \hfill
											\begin{minipage}[t]{0.45\linewidth}
													\centering
					\ifMAKEPICS
					\begin{gnuplot}[terminal=epslatex]
						set output "Figures/Example5tex.tex"
						set datafile separator "|"
						set logscale
						set grid ytics lc rgb "#bbbbbb" lw 1 lt 0
						set grid xtics lc rgb "#bbbbbb" lw 1 lt 0
						set xlabel '\text{DOFs}'
						set format '
						plot '< sqlite3 Data/p_enrichment/NS2D_1_adaptive_Lift_Drag_PDiff_Radon1_2.db "SELECT DISTINCT DOFS_primal, relativeError1 from data"' u 1:2 w lp lw 2 title '$c_{\text{\text{drag}}}^p$',\
						'< sqlite3 Data/h_enrichment/data.db "SELECT DISTINCT DOFS_primal, relativeError2 from data"' u 1:2 w lp lw 2 title '$c_{\text{\text{drag}}}^h$',\
							'< sqlite3 Data/Uniform/uniform_data_p.db "SELECT DISTINCT DOFS_primal, relativeError1 from data_global"' u 1:2 w lp lw 2 title '$c_{\text{\text{drag}}}$',\
						#'< sqlite3 Data/h_enrichment/data.db "SELECT DISTINCT DOFS_primal , Estimated_Error_primal from data"' u 1:2 w lp lw 2 title 'Estimated Error(primal)',\
						#'< sqlite3 Data/h_enrichment/data.db "SELECT DISTINCT DOFS_primal , Estimated_Error_adjoint from data"' u 1:2 w lp lw 2 title 'Estimated(adjoint)',\
					\end{gnuplot}
					\fi
					\scalebox{0.47}{\tiny\input{Figures/Example5tex.tex}}
					\captionof{figure}{The errors in  $c_{\text{drag}}$ for  refinement with $p$-enriched error estimation ($c_{\text{\text{drag}}}^p$), refinement with $h$-enriched error estimation ($c_{\text{\text{drag}}}^h $), and uniform refinement ($c_{\text{\text{drag}}}$). \label{Figure: drag}}
						\end{minipage} 
						\begin{minipage}[t]{0.45\linewidth}
								\centering
					\ifMAKEPICS
					\begin{gnuplot}[terminal=epslatex]
						set output "Figures/Example6tex.tex"
						set datafile separator "|"
						set logscale
						set grid ytics lc rgb "#bbbbbb" lw 1 lt 0
						set grid xtics lc rgb "#bbbbbb" lw 1 lt 0
						set xlabel '\text{DOFs}'
						set format '
						plot '< sqlite3 Data/p_enrichment/NS2D_1_adaptive_Lift_Drag_PDiff_Radon1_2.db "SELECT DISTINCT DOFS_primal, relativeError0 from data"' u 1:2 w lp lw 2 title '$\Delta p^p$',\
						'< sqlite3 Data/h_enrichment/data.db "SELECT DISTINCT DOFS_primal, relativeError0 from data"' u 1:2 w lp lw 2 title '$\Delta p^h$',\
						'< sqlite3 Data/Uniform/uniform_data_p.db "SELECT DISTINCT DOFS_primal, relativeError0 from data_global"' u 1:2 w lp lw 2 title '$\Delta p$',\
						#'< sqlite3 Data/h_enrichment/data.db "SELECT DISTINCT DOFS_primal , Estimated_Error_primal from data"' u 1:2 w lp lw 2 title 'Estimated Error(primal)',\
						#'< sqlite3 Data/h_enrichment/data.db "SELECT DISTINCT DOFS_primal , Estimated_Error_adjoint from data"' u 1:2 w lp lw 2 title 'Estimated(adjoint)',\
					\end{gnuplot}
					\fi
					\scalebox{0.47}{\tiny\input{Figures/Example6tex.tex}}
					\captionof{figure}{The errors in  ${\Delta p}$ for  refinement with $p$-enriched error estimation (${\Delta p}^p$), refinement with $h$-enriched error estimation (${\Delta p}^h $), and uniform refinement (${\Delta p}$).\label{Figure: p}}
						\end{minipage} \hfill
										\begin{minipage}[t]{0.45\linewidth}
												\centering
											\ifMAKEPICS
											\begin{gnuplot}[terminal=epslatex]
												set output "Figures/Example2tex.tex"
												set datafile separator "|"
												set logscale
												set grid ytics lc rgb "#bbbbbb" lw 1 lt 0
												set grid xtics lc rgb "#bbbbbb" lw 1 lt 0
												set xlabel '\text{DOFs}'
												set format '
												plot '< sqlite3 Data/p_enrichment/NS2D_1_adaptive_Lift_Drag_PDiff_Radon1_2.db "SELECT DISTINCT DOFS_primal, Estimated_Error_remainder from data"' u 1:2 w lp lw 2 title '$|\eta^{(2)}_\mathcal{R}|$',\				
												'< sqlite3 Data/p_enrichment/NS2D_1_adaptive_Lift_Drag_PDiff_Radon1_2.db "SELECT DISTINCT DOFS_primal, abs(AbsErrorInErrorTotalEstimation) from data"' u 1:2 w lp lw 2 title '$\eta_{\mathbb{E}}^{(2)}$',\				
												'< sqlite3 Data/h_enrichment/data.db "SELECT DISTINCT DOFS_primal, Estimated_Error_remainder from data"' u 1:2 w lp lw 2 title '$|\eta_{\mathcal{R},\frac{h}{2}}|$',\
												'< sqlite3 Data/h_enrichment/data.db "SELECT DISTINCT DOFS_primal, abs(AbsErrorInErrorTotalEstimation) from data"' u 1:2 w lp lw 2 title '$\eta_{\mathbb{E},\frac{h}{2}}$',\
												x*2**(-52) lw 5 dashtype 2  lc rgb "red" title '$\varepsilon(\text{double}) \times \text{DOFs}$'\
												#'< sqlite3 Data/h_enrichment/data.db "SELECT DISTINCT DOFS_primal , Estimated_Error_primal from data"' u 1:2 w lp lw 2 title 'Estimated Error(primal)',\
												#'< sqlite3 Data/h_enrichment/data.db "SELECT DISTINCT DOFS_primal , Estimated_Error_adjoint from data"' u 1:2 w lp lw 2 title 'Estimated(adjoint)',\
												#'< sqlite3 Data/Uniform/uniform_data_p.db "SELECT DISTINCT DOFS_primal, Exact_Error from data_global"' u 1:2 w lp lw 2 title '$J_\mathfrak{E}^p$',\
											\end{gnuplot}
											\fi
											\scalebox{0.47}{\tiny\input{Figures/Example2tex.tex}}
											\captionof{figure}{The remainder parts $\eta^{(2)}_\mathcal{R}$, $\eta_{\mathcal{R},\frac{h}{2}}$ and gap parts $\eta_{\mathbb{E}}^{(2)}$, $\eta_{\mathbb{E},\frac{h}{2}} $ for $p$ and $h$ enrichment. The remainder parts $\eta^{(2)}_\mathcal{R}$, $\eta_{\mathcal{R},\frac{h}{2}}$ are indeed neglectable as usually done in literature. \label{Figure: Theoretical Error}}
										\end{minipage} %
\end{figure}%
}

\section*{Acknowledgement}
 This work has been supported by the Austrian Science Fund (FWF) under the grant
 P 29181
 `Goal-Oriented Error Control for Phase-Field Fracture Coupled to Multiphysics Problems'. Furthermore, we thank Daniel Jodlbauer for discussions.

 \bibliographystyle{abbrv}
 \bibliography{lit.bib}
\end{document}